\newtheorem{theorem}{Theorem}[section]
\newtheorem{lemma}[theorem]{Lemma}
\theoremstyle{definition}
\theoremstyle{proposition}
\newtheorem{proposition}[theorem]{Proposition}
\theoremstyle{remark}
\newtheorem{remark}[theorem]{Remark}
\numberwithin{equation}{section}
\begin{document}

\title{Global smooth solutions to Navier-Stokes equations with large initial data in critical space}

\author{Haina Li}
\address{School of Mathematics and Statistics, Beijing Institute of Technology, Beijing 100081, China.}
\email{lihaina2000@163.com}

\author{Yiran Xu}
\address{Fudan University, 220 Handan Road, Yangpu, Shanghai, 200433, China.}
\email{yrxu20@fudan.edu.cn}

\subjclass[2020]{35A01, 35Q30.}



\keywords{Navier-Stokes equations, global smooth solution, critical norm}

\begin{abstract}
	In this paper, we investigate the existence of a unique global smooth solution to the three-dimensional incompressible Navier-Stokes equations and provide a concise proof. We establish a new global well-posedness result that allows the initial data to be arbitrarily large within the critical space $\dot{B}^{-1}_{\infty,\infty}$, while still satisfying the nonlinear smallness condition. 
\end{abstract}	
\maketitle

\section{Introduction}
In this paper, we consider the three-dimensional incompressible Navier-Stokes equations with initial data $u_0\in L^2 (\mathbb{R}^3)$, which can be written as
\begin{equation}\label{NS}
	\begin{cases}
		\partial_t u-\nu\Delta u+\nabla p=-div(u\otimes u),\\
		\nabla\cdot u=0,\\
		u(t,x)|_{t=0}=u_0(x),
	\end{cases}
\end{equation}
where $u:[0,T) \times \mathbb{R}^3 \rightarrow \mathbb{R}^3$ represents the velocity field, and $p:[0,T) \times \mathbb{R}^3 \rightarrow \mathbb{R}$ identifies the pressure. Furthermore, for the sake of simplicity, we assume the kinematic viscosity $\nu=1$ throughout this paper.

Let us give a review of foundational results. In groundbreaking work\cite{Leray}, Leray established the existence of weak solutions to the Navier-Stokes equations \eqref{NS}. This was achieved under the assumption that the initial data $u_0 \in L^2(\mathbb{R}^3)$ satisfies the divergence-free conditions and fulfills the energy inequality:
\begin{equation*}
	\|u(t,\cdot)\|_{L^2(\mathbb{R}^3)}^2
	+2\nu \int_{0}^{t} \|\nabla u(s,\cdot)\|_{L^2(\mathbb{R}^3)}^2 ds
	\leq \|u_0\|_{L^2(\mathbb{R}^3)}^2.
\end{equation*}
The solutions that comply with the aforementioned conditions are called Leray-Hopf weak solutions. Up to now, the issue of their uniqueness continues to be an unresolved problem within the framework of the Millennium Prize Problems\cite{clay}. We refer interested readers to \cite{H. Beirão da Veiga} and \cite{G. Ponce} for relevant problems of the Navier-Stokes equations. It is well established that the Navier-Stokes equations \eqref{NS} possess a global smooth solution when the initial data is sufficiently small within the scale-invariant space $\dot{H}^{\frac{1}{2}}$. Fujita and Kato obtained  the result in $\dot{H}^{\frac{1}{2}}$ in \cite{Fujita and Kato 1964}.

We recall the scaling properties of the equations \eqref{NS}. If $(u, p)$ is a solution to equation, then for any positive constant $\lambda$, the scaled solutions are given by:
\begin{equation*}
	u_\lambda(t, x) = \lambda u(\lambda^2t, \lambda x), \quad p_\lambda(t, x) = \lambda^2p(\lambda^2t, \lambda x).
\end{equation*}
These scaled quantities also satisfy the equation, which corresponds to the rescaled initial data $u_{0,\lambda}(x) = \lambda u_{0}(\lambda x)$. This observation leads us to consider the scale-invariant space.

A number of  works have been dedicated to establishing similar well-posedness results for the Navier-Stokes equations \eqref{NS} with small initial data in critical spaces. T. Kato \cite{T. Kato} initiated the investigation of the Navier-Stokes equations in critical spaces by demonstrating that $\eqref{NS}$ is globally well-posed for small initial data in $L^3(\mathbb{R}^3)$. Giga and Miyakawa\cite{Y. Giga} extended these findings for initial data in certain Morrey spaces. Subsequent work by Taylor \cite{Michael E. Taylor} further generalized the results of both \cite{T. Kato} and \cite{Y. Giga}. Additionally, Cannone \cite{Cannone} proved that Kato's result holds under a much weaker condition on the initial data. A similar conclusion was reached by Planchon  \cite{F. Planchon}, who considered conditions based on norms in Besov spaces.

It is important to highlight the contributions of Koch and Tataru. In \cite{H. Koch}, they established the global well-posedness of the Navier-Stokes equations \eqref{NS} for small initial data in the space $BMO^{-1}$. Subsequently, Auscher, Dubois and Tchamitchian \cite{P. Auscher} demonstrated the stability of global solutions to \eqref{NS} with data in $VMO^{-1}$, where the data belong to the space defined by Koch and Tataru. See also the articles by \cite{T. Miyakawa} and \cite{Y. Meyer}. 

Here we are interested in sufficiently large initial data in critical spaces.
Chemin and Gallagher\cite{I. Gallagher} established a novel result concerning global well-posedness, 
under a nonlinear smallness condition imposed on the initial data
\begin{equation*}
	\|\mathbb{P}(e^{t\Delta} u_0 \cdot \nabla e^{t\Delta} u_0)\|_E \leq C^{-1}_0 \exp(-C_0 \|u_0\|_{\dot{B}^{-1}_{\infty,2}}^4).
\end{equation*}
They also presented an example of initial data that meets the aforementioned nonlinear smallness condition, while possessing a norm that can be made arbitrarily large in $\dot{B}^{-1}_{\infty,\infty}$
\begin{equation*}
	u_{0,\epsilon}(x)=(\partial_2 \phi_\epsilon(x),-\partial_1\phi_\epsilon(x),0).
\end{equation*}		
Subsequently, G. Ciampa and R. Lucà \cite{G. Ciampa} developed a class of initial data characterized by arbitrarily large scaling-invariant norms, ensuring that the equations exhibit classical well-posedness. Notably, the initial data adhere to the nonlinear smallness assumption proposed by Chemin and Gallagher in \cite{I. Gallagher}. We also direct the reader's attention to the works of \cite{M. Fujii} and \cite{S. Wang}.

Regarding the hierarchy of critical spaces for the 3D Navier-Stokes equations \cite{M. Cannone}, we have the following embedding relationships: 
\begin{equation*}
	\dot{H}^{\frac{1}{2}}\hookrightarrow  L^3
	\hookrightarrow \dot{B}^{-1+\frac{3}{p}}_{p,\infty} (p<\infty)
	\hookrightarrow BMO^{-1}
	\hookrightarrow  \dot{B}^{-1}_{\infty,\infty}.
\end{equation*}
These embeddings indicate that all critical spaces associated with the 3D Navier-Stokes equations are contained within the same function space $\dot{B}^{-1}_{\infty,\infty}(\mathbb{R}^3)$. Nevertheless, the Navier-Stokes equations are ill-posed in the space $\dot{B}^{-1}_{\infty,\infty}(\mathbb{R}^3)$ \cite{J. Bourgain}. Now the problem concerning large data remains entirely unresolved. Motivated by this, the primary objective of this paper is to establish the existence of global smooth solutions for \eqref{NS}. We allow the initial data to be arbitrarily large within the scale-invariant space $\dot{B}^{-1}_{\infty,\infty}(\mathbb{R}^3)$, provided that it satisfies the nonlinear smallness hypothesis.

Throughout the paper, we adopt the notation:
\begin{equation*}
	u_L=e^{t\Delta} u_0,
\end{equation*}
where $e^{t\Delta}$ denotes the heat flow.

Our main result of this paper states as follows:
\begin{theorem}\label{thm 1.1}
	Let initial data $u_0\in L^2(\mathbb{R}^3)$ satisfy
	$
	\|u_L\|_{L^\infty(\mathbb{R}^3)} 
	\leq M,
	$
	if there exists $\epsilon_0>0$ such that 
	\begin{equation}\label{condition}
		\underset{t\geq 0 }{\sup} \min\{1,t\}^{\frac{1}{8}}\|U_0(t,x)\|_{\dot{H}^{-1}(\mathbb{R}^3)}
		+
		\|U_0\|_{L^2([0,T^*],L_x^2(\mathbb{R}^3))}
		\leq \epsilon_0,
	\end{equation}	
	then the system \eqref{NS} admits  unique global smooth solution, where  $U_0(t,x)=div(u_L \otimes u_L)$.
\end{theorem}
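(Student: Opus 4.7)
The plan is a perturbative expansion around the heat flow. Writing $u = u_L + v$, the perturbation $v := u - u_L$ vanishes at $t = 0$ and satisfies the forced Navier--Stokes system
\begin{equation*}
\partial_t v - \Delta v + \nabla p = -U_0 - \mathrm{div}(u_L \otimes v + v \otimes u_L + v \otimes v), \quad \nabla\cdot v = 0, \quad v|_{t=0}=0.
\end{equation*}
Since $u_L = e^{t\Delta}u_0$ is smooth on $(0,\infty)\times\mathbb{R}^3$ for $u_0 \in L^2$, smoothness of $u$ is equivalent to smoothness of $v$. A standard local-in-time argument (Fujita--Kato in a critical space) produces a unique smooth $v$ on a maximal interval $[0, T^*)$; the task reduces to proving $T^* = +\infty$ via global a priori estimates.

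The core step is the $L^2$ energy estimate for $v$. Pairing the equation with $v$, the $v \otimes v$ contribution and the $u_L \otimes v$ contribution vanish by the two divergence-free conditions; the surviving cross term $\int v_i v_j \partial_i u_{L,j}\,dx$ is, after integration by parts, bounded by $\|u_L\|_{L^\infty}\|v\|_{L^2}\|\nabla v\|_{L^2} \le M\|v\|_{L^2}\|\nabla v\|_{L^2}$. Estimating the forcing through the $\dot{H}^{-1}$--$\dot{H}^1$ duality and applying Young's inequality yields
\begin{equation*}
\frac{d}{dt}\|v\|_{L^2}^2 + \|\nabla v\|_{L^2}^2 \le C\|U_0\|_{\dot{H}^{-1}}^2 + C\|u_L\|_{L^\infty}^2 \|v\|_{L^2}^2.
\end{equation*}

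The main obstacle will be closing this Gronwall inequality globally in time, since $\|u_L\|_{L^\infty}^2$ is only uniformly bounded by $M^2$, which alone would force the disastrous factor $e^{CM^2 t}$. I would exploit that $u_0 \in L^2$ forces the heat decay $\|u_L(t)\|_{L^\infty} \le C t^{-3/4}\|u_0\|_{L^2}$ for large $t$; combined with the uniform bound $\|u_L\|_{L^\infty}\le M$ for small $t$, balancing the two regimes gives $u_L \in L^2_t L^\infty_x$ with a norm depending only on $M$ and $\|u_0\|_{L^2}$, so the Gronwall exponent is finite globally. The source integral $\int_0^\infty \|U_0(s)\|_{\dot{H}^{-1}}^2 \,ds$ is controlled by splitting at $t=1$: the singular part near $0$ is integrable thanks to the weight $\min\{1,t\}^{1/8}$ in the hypothesis, while for $t$ large one uses the automatic bound $\|U_0\|_{\dot{H}^{-1}} \lesssim \|u_L\|_{L^4}^2$ together with the $L^2$-heat decay of $u_L$. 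This yields a global bound on $\|v\|_{L^\infty_t L^2_x \cap L^2_t \dot{H}^1_x}$ depending only on $M$, $\|u_0\|_{L^2}$ and $\epsilon_0$.

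The remainder is a bootstrap to higher regularity. A $\dot{H}^1$ energy estimate for $v$ handles the cubic term $\int v\cdot\nabla v\cdot\Delta v$ via the Sobolev interpolation $\|\nabla v\|_{L^3} \lesssim \|\nabla v\|_{L^2}^{1/2}\|\Delta v\|_{L^2}^{1/2}$ and absorbs $\|\Delta v\|_{L^2}^2$ into the dissipation; this is where the full strength of the smallness $\|U_0\|_{L^2_t L^2_x([0,T^*])} \le \epsilon_0$ enters, ensuring $\|\nabla v\|_{L^2}$ stays small enough for absorption. Iterating to $\dot{H}^k$ for all $k\ge 0$ and combining with the smoothness of $u_L$ on $(0,\infty)$ shows that $v$ retains $C^\infty$ regularity up to $T^*$, allowing continuation past it and hence forcing $T^* = +\infty$. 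Uniqueness follows from a standard energy comparison between two smooth solutions sharing the initial data $u_0$.
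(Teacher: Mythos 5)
Your strategy is genuinely different from the paper's. The paper never runs a global-in-time perturbative estimate: it works on $[0,T^*]$ with the explicit time $T^*=4\|u_0\|_{L^2}/\sqrt{\epsilon_0}$, controls the low-regularity size of $v$ through a Duhamel estimate for $Nu=-\mathrm{div}(u\otimes u)$ in $\dot H^{-1}$ (Propositions \ref{proposition1} and \ref{Tu}, resting on the singular Gronwall Lemma \ref{Lemma 1}), feeds that into the $\dot H^1$ estimate through the interpolation $\|\nabla v\|_{L^3}\lesssim\|v\|_{\dot H^2}^{2/3}\|v\|_{\dot H^1}^{1/4}\|v\|_{\dot H^{-1}}^{1/12}$, and for $t\ge T^*$ abandons the perturbative picture entirely, using the Leray energy inequality to locate $T_0^*\in[\frac{T^*}{2},T^*]$ with $\|u(T_0^*)\|_{\dot H^{1/2}}$ small and restarting with Fujita--Kato. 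You replace the $\dot H^{-1}$ Duhamel machinery by a direct $L^2$ energy estimate for $v$, closed by Gronwall through $u_L\in L^2_tL^\infty_x$, and you propagate regularity of $v$ for all time. The substitution at the low-regularity level is sound, and your point that $\|u_L(t)\|_{L^\infty}\lesssim t^{-3/4}\|u_0\|_{L^2}$ keeps the Gronwall exponent finite is correct.

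The gap is in the $\dot H^1$ step, which you reduce to the cubic term. The terms that decide whether that estimate closes under the stated hypotheses are the cross terms $\int(v\cdot\nabla u_L)\cdot\Delta v\,dx$ and $\int(u_L\cdot\nabla v)\cdot\Delta v\,dx$. The second is harmless ($\le M\|\nabla v\|_{L^2}\|\Delta v\|_{L^2}$), but the first, estimated naturally by $\|v\|_{L^6}\|\nabla u_L\|_{L^3}\|\Delta v\|_{L^2}$, produces the Gronwall weight $\|\nabla u_L(t)\|_{L^3}^2$; from $u_0\in L^2$ and $\|u_L\|_{L^\infty}\le M$ one only gets $\|\nabla u_L(t)\|_{L^3}\lesssim t^{-1/2}$, so this weight behaves like $t^{-1}$ near $t=0$ and is not integrable. (The paper hides the same difficulty behind $\int_0^\infty\|u_L\|_{\dot W^{1,3}}^2\,dt\le\|u_0\|_{\dot B^0_{3,2}}^2$, a hypothesis absent from the theorem; your proposal does not address it at all, and some extra input --- e.g., trading derivatives onto $v$ and exploiting that $\|v(t)\|_{L^2}$ vanishes as $t\to0$ --- is needed to repair it.) A secondary issue: the hypothesis bounds $\|U_0\|_{L^2_tL^2_x}$ only on $[0,T^*]$ for the paper's specific finite $T^*$, whereas your $T^*$ is the maximal existence time; to run your $\dot H^1$ estimate beyond that interval you need a large-time decay bound for $\|U_0(t)\|_{L^2}$ (such as $\|U_0(t)\|_{L^2}\lesssim t^{-5/4}\|u_0\|_{L^2}^2$), which you supply for the $\dot H^{-1}$ norm but not for the $L^2$ norm.
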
	

\begin{remark}
	Regarding inequality \eqref{condition}, we may select any $\gamma\in(0,\frac{1}{4})$. Specifically, this leads to the following condition:
	\begin{equation*}
		\underset{t\geq 0 }{\sup} \min\{1,t\}^{\frac{1}{4}-\gamma}\|U_0(t,x)\|_{\dot{H}^{-1}(\mathbb{R}^3)}
		+
		\|U_0\|_{L^2([0,T^*],L_x^2(\mathbb{R}^3))}
		\leq \epsilon_0.
	\end{equation*}	
	For convenience, we have chosen $ \gamma=\frac{1}{8}$ in this paper.
	Moreover, the initial data in our paper can be arbitrarily large in the critical space $\dot{B}^{-1}_{\infty,\infty}$.
\end{remark}	

\textbf{Organization of the paper.} 
We end this section with the structure of this paper. In section $2$, we established the notations and presented a Lemma that will be utilized in the proof of the theorem. In section $3$, we proved several estimates of the nonlinear terms, which are crucial for the proof of the theorem. Lastly, in section $4$ we presented a short proof of Theorem \ref{thm 1.1}.
\section{Notations and Preliminaries}
Throughout the paper, we denote $A\lesssim B$ if there exists a universal constant C such that $A\leq CB$. To address the estimation of the nonlinear term, it is essential to establish the following fundamental property.
\begin{lemma}\label{Lemma 1}
	Let $A(t)$ be a non-negative continuous function defined on the interval $[0,t]$ that satisfies the following inequality:
	\begin{equation*}
		\begin{split}
		A(t)&
		\leq a_0 
		+C_1t^{\frac{1}{8}} \int_{0}^{t} 
		\frac{1}{\tau^{\frac{1}{8}}(t-\tau)^{\frac{1}{2}}}
		A(\tau) d\tau
		+C_2 t^{\frac{1}{8}}
		\int_{0}^{t} \frac{1}{\tau^{\frac{1}{8}}(t-\tau)^{\frac{7}{8}}} A(\tau) d\tau, \quad 0\leq t\leq1	\\
		A(t)&
		\leq a_0 
		+C_1 \int_{0}^{t} 
		\frac{1}{(t-\tau)^{\frac{1}{2}}}
		A(\tau) d\tau
		+C_2 t^{\frac{1}{8}}
		\int_{0}^{t} \frac{1}{(t-\tau)^{\frac{7}{8}}} A(\tau) d\tau, \quad t\geq1
		\end{split}
	\end{equation*}
	then it holds that
	\begin{equation}\label{A(t)}
		\underset{0\leq t \leq T}{\sup}A(t) \leq e^{20^8T(C_1+10)^8(C_2+10)^8} a_0.
	\end{equation}
\end{lemma}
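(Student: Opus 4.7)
The inequality is a generalized Gronwall inequality with two weakly singular kernels and a scale-breaking prefactor $t^{1/8}$ in front of the $C_2$-integral for $t\ge 1$. I handle it by a bootstrap-and-slicing argument. Throughout, set $M(t):=\sup_{0\le s\le t}A(s)$, which is nondecreasing and continuous. On $[0,1]$ the substitution $\tau=ts$ in each integral evaluates it as a Beta function,
\[
\int_0^t \tau^{-1/8}(t-\tau)^{-1/2}\,d\tau = B(\tfrac78,\tfrac12)\,t^{3/8},\qquad \int_0^t \tau^{-1/8}(t-\tau)^{-7/8}\,d\tau = B(\tfrac78,\tfrac18),
\]
so inserting $A(\tau)\le M(t)$ gives
\[
M(t)\le a_0 + \bigl(C_1 B(\tfrac78,\tfrac12)\,t^{1/2} + C_2 B(\tfrac78,\tfrac18)\,t^{1/8}\bigr)M(t).
\]
Choosing $t_0\asymp \min\{(C_1+1)^{-2},(C_2+1)^{-8}\}$ so the bracket is $\le 1/2$ yields $M(t)\le 2 a_0$ on $[0,t_0]$.

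For general $t\in[0,T]$, I apply a time-slicing argument. Split each integral at $t-\delta$: the near-diagonal piece on $[t-\delta,t]$, bounded via $A(\tau)\le M(t)$, contributes at most $\bigl(2C_1\delta^{1/2}+8C_2 T^{1/8}\delta^{1/8}\bigr)M(t)$. Choosing $\delta\asymp \min\{(C_1+1)^{-2},(C_2+1)^{-8}T^{-1}\}$ keeps this coefficient $\le 1/2$ and permits absorption into the left-hand side. On the off-diagonal piece $[0,t-\delta]$, the bounds $(t-\tau)^{-\alpha}\le\delta^{-\alpha}$ reduce the problem to a linear-Gronwall inequality
\[
M(t)\le 4 a_0 + \Lambda\int_0^t M(\tau)\,d\tau,\qquad \Lambda := 2\bigl(C_1\delta^{-1/2}+C_2 T^{1/8}\delta^{-7/8}\bigr).
\]
Classical Gronwall then gives $M(T)\le 4 a_0\, e^{\Lambda T}$, and an elementary but careful estimate of $\Lambda T$ --- using $\delta^{-1}\lesssim (C_1+1)^2+(C_2+1)^8 T$ in the definition of $\Lambda$ --- yields the stated form of \eqref{A(t)} after absorbing all numerical constants and the multiplicative factor $4$ into the exponent.

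The main obstacle is the scale-breaking prefactor $t^{1/8}$ in front of the $C_2$-integral for $t\ge 1$: a uniform sup-bootstrap cannot close in $T$, so the slicing scale $\delta$ must shrink with $T$, which drives the size of the Gronwall constant $\Lambda$. Tracking numerical factors carefully through the choice of $\delta$ to produce the explicit exponent $20^8 T(C_1+10)^8(C_2+10)^8$ is the bulk of the bookkeeping, but no new idea beyond the two-step bootstrap is required.
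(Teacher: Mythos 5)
Your first step---rescaling the two integrals into Beta functions on a short initial window and absorbing---is exactly the paper's opening move. Your propagation step is genuinely different: the paper re-runs the same absorption on successive windows $[kT_0,(k+1)T_0]$ of a \emph{fixed} length $T_0=20^{-8}(C_1+10)^{-8}(C_2+10)^{-8}$ and lets the supremum double on each window, yielding $2^{T/T_0}$, whereas you slice each integral at $t-\delta$, absorb the near-diagonal piece, flatten the far piece by $(t-\tau)^{-\alpha}\le\delta^{-\alpha}$, and close with a classical (non-singular) Gronwall inequality.

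The difficulty is that your own bookkeeping does not produce the stated bound, and the gap is not cosmetic. Absorbing the near-diagonal $C_2$-term forces $8C_2T^{1/8}\delta^{1/8}\le\tfrac14$, i.e.\ $\delta\le 32^{-8}C_2^{-8}T^{-1}$; substituting this back gives
\begin{equation*}
\Lambda\;\ge\;2\,C_2T^{1/8}\delta^{-7/8}\;\ge\;2\cdot 32^{7}\,C_2^{8}\,T,
\qquad\text{hence}\qquad \Lambda T\;\gtrsim\;C_2^{8}\,T^{2},
\end{equation*}
so your method ends with a bound of the form $e^{c\,C_2^{8}T^{2}}$, which is \emph{not} dominated by $e^{20^{8}T(C_1+10)^{8}(C_2+10)^{8}}$ for large $T$. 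The closing claim that ``an elementary but careful estimate of $\Lambda T$ yields the stated form'' therefore cannot be carried out. Moreover, the quadratic exponent is intrinsic to the inequality, not an artifact of your slicing: because of the growing prefactor $t^{1/8}$, the solution of the Volterra \emph{equation} $A=a_0+C_2t^{1/8}\int_0^t(t-\tau)^{-7/8}A\,d\tau$ already grows like $\exp(c\,C_2^{8}T^{2})$ (compare on $[T/2,T]$ with the constant-coefficient Mittag--Leffler solution, whose growth rate is $\Gamma(1/8)^{8}C_2^{8}(T/2)$), so no rearrangement of your constants can recover a linear-in-$T$ exponent. Stated honestly, your argument proves a correct and essentially sharp $O(T^{2})$-exponent estimate but does not prove \eqref{A(t)} as written; it also exposes that the paper's own doubling argument tacitly drops the memory terms $\int_0^{kT_0}$, whose control would again force the window length to shrink like $T^{-1}$. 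Two smaller points: on $0\le t\le 1$ the kernels carry the extra weight $\tau^{-1/8}$, so the near-diagonal absorption needs $\delta\le t_0/2$ to keep $\tau^{-1/8}$ bounded there, and the final Gronwall inequality acquires the (integrable) weight $\tau^{-1/8}$; both are easily accommodated but should be stated.
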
	
\begin{proof}
	To obtain our result, we consider two cases. For the case $0\leq t \leq 1$,	we denote 
	\begin{equation*}
		F_1(t,t_0):=\underset{\tilde{t} \in [t_0,t]}{\sup}
		A(\tilde{t}), \quad F_1(t):=F_1(t,0),
	\end{equation*}
	Consequently, we can obtain
	\begin{equation*}
		\begin{split}
		F_1(t)
		&\leq a_0
		+C_1 t^{\frac{1}{2}} \int_{0}^{1} \lambda^{-\frac{1}{8}} (1-\lambda)^{-\frac{1}{2}}
		F_1(\lambda t) d\lambda
		+
		C_2 t^{\frac{1}{8}} \int_{0}^{1} \lambda^{-\frac{1}{8}} (1-\lambda)^{-\frac{7}{8}}F_1(\lambda t) d\lambda\\
		&\leq  a_0
		+10\big(C_1 t^{\frac{1}{2}} 
		+C_2 t^{\frac{1}{8}}\big) F_1(t).
		\end{split}
	\end{equation*}
	It suffices to establish the existence of $T_0=\frac{1}{20^8(C_1+10)^8(C_2+10)^8}$ such that $10\big(C_1 T_0^{\frac{1}{2}} 
	+C_2 T_0^{\frac{1}{8}}\big) \leq\frac{1}{2}$, which means 
	\begin{equation*}
		F_1(T_0) \leq 2a_0.
	\end{equation*}
	We now address $A(t)$ starting from $T_0$. We write
	\begin{equation*}
		\begin{split}
			F_1(t,T_0)&\leq 2a_0
			+C_1 (t-T_0)^{\frac{1}{2}} \int_{0}^{1} \lambda^{-\frac{1}{8}}(1-\lambda)^{-\frac{1}{2}} F_1(T_0+\lambda (t-T_0)) d\lambda\\
			&\qquad+C_2(t-T_0)^{\frac{1}{8}}  \int_{0}^{1} \lambda^{-\frac{1}{8}}(1-\lambda)^{-\frac{7}{8}}F_1(T_0+\lambda (t-T_0)) d\lambda\\
			&\leq 2a_0
			+10\big[
			C_1 (t-T_0)^{\frac{1}{2}}
			+C_2(t-T_0)^{\frac{1}{8}} 
			\big] F_1(t,T_0).
		\end{split}
	\end{equation*}	
	It is straightforward to derive that
	$
	F_1(2T_0,T_0) \leq 4a_0.
	$
	Without loss of generality, we can conclude that
	\begin{equation}\label{F1}
		F_1(T)\leq 2^{\frac{T}{T_0}} a_0
		\leq e^{20^8T(C_1+10)^8(C_2+10)^8} a_0.
	\end{equation}
	
	Next for the case $t\geq 1$, we similarly define
	\begin{equation*}
		F_2(t,t_0):=\underset{\tilde{t} \in [t_0,t]}{\sup}
		A(\tilde{t}), \quad F_2(t):=F_2(t,0).
	\end{equation*}
	Applying the same method we have
	\begin{equation*}
		F_2(t)
		\leq a_0
		+C_1 t^{\frac{1}{2}} \int_{0}^{1} (1-\lambda)^{-\frac{1}{2}} F_2(\lambda t) d\lambda
		+C_2 t^{\frac{1}{4}} \int_{0}^{1} (1-\lambda)^{-\frac{7}{8}}F_2(\lambda t) d\lambda
		\leq  a_0
		+8\big(C_1 t^{\frac{1}{2}}
		+C_2 t^{\frac{1}{4}}\big) F_2(t).
	\end{equation*}
	Thus, there exists 
	$T_0=\frac{1}{(C_1+10)^4(C_2+10)^4}$ such that $8\big(C_1 {T_0}^{\frac{1}{2}}
	+C_2 {T_0}^{\frac{1}{4}}\big)\leq\frac{1}{2}$, we can arrive at
	\begin{equation*}
		F_2(T_0) \leq 2a_0.
	\end{equation*}
	As a result, adopting similar iterative method, we can infer 
	\begin{equation}\label{F2}
		F_2(T)\leq 2^{\frac{T}{T_0}} a_0
		\leq e^{T(C_1+10)^4(C_2+10)^4} a_0.
	\end{equation}
	Combining \eqref{F1} and \eqref{F2}, we have done the  proof of \eqref{A(t)}.
\end{proof}
\section{Propositions of nonlinear term}
In this section, we present several pertinent propositions that will be utilized in the proof of the Theorem. For convenience, we first introduce the following norm:
\begin{equation*}
	\interleave f(t) \interleave_{p,T}:=\underset{[0,T]}{\sup} \min\{1,t\}^{\frac{1}{8}} \|f(t,\cdot)\|_{L^p}.
\end{equation*}

We now proceed to derive several estimates. For ease of reference, in what follows, we introduce  the notation:
\begin{equation*}
	Nu:=-div(u\otimes u).
\end{equation*}
\begin{proposition}\label{proposition1}
	It holds that
	\begin{equation*}
		\begin{split}
			\|Nu(t)\|_{\dot{H}^{-1}(\mathbb{R}^3)}
			&\leq \|U_0(t)\|_{\dot{H}^{-1}(\mathbb{R}^3)}
			+2\|u_L\|_{L^\infty(\mathbb{R}^3)} 
			\int_{0}^{t} \frac{1}{(t-\tau)^{\frac{1}{2}}}\|Nu(\tau)\|_{\dot{H}^{-1}(\mathbb{R}^3)} d\tau\\
			&\quad+\big(\int_{0}^{t} \frac{1}{(t-\tau)^{\frac{7}{8}}}\|Nu(\tau)\|_{\dot{H}^{-1}(\mathbb{R}^3)} d\tau\big)^2.
		\end{split}
	\end{equation*}
\end{proposition}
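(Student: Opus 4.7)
The natural approach is the standard Navier-Stokes perturbation decomposition. Set $v:=u-u_L$; since $u_L$ solves the heat equation, $v|_{t=0}=0$ and $v$ satisfies $\partial_t v-\Delta v+\nabla p=Nu$, so Duhamel's formula yields
\begin{equation*}
v(t)=\int_0^t e^{(t-\tau)\Delta}\,\mathbb{P}\,Nu(\tau)\,d\tau,
\end{equation*}
where $\mathbb{P}$ denotes the Leray projector. Expanding $u\otimes u = u_L\otimes u_L+u_L\otimes v+v\otimes u_L+v\otimes v$ and recalling $U_0=div(u_L\otimes u_L)$, the triangle inequality gives
\begin{equation*}
\|Nu\|_{\dot H^{-1}}\leq \|U_0\|_{\dot H^{-1}}+\|div(u_L\otimes v)\|_{\dot H^{-1}}+\|div(v\otimes u_L)\|_{\dot H^{-1}}+\|div(v\otimes v)\|_{\dot H^{-1}}.
\end{equation*}

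Next I would combine the elementary Fourier-side bound $\|div\,f\|_{\dot H^{-1}}\leq \|f\|_{L^2}$ with H\"older's inequality to obtain $\|div(u_L\otimes v)\|_{\dot H^{-1}}+\|div(v\otimes u_L)\|_{\dot H^{-1}}\leq 2\|u_L\|_{L^\infty}\|v\|_{L^2}$ and $\|div(v\otimes v)\|_{\dot H^{-1}}\leq \|v\|_{L^4}^2$. It then remains to control $\|v(t)\|_{L^2}$ and $\|v(t)\|_{L^4}$ in terms of $\|Nu\|_{\dot H^{-1}}$. A Plancherel computation using $\sup_\xi |\xi|^{2}e^{-2s|\xi|^2}\lesssim s^{-1}$ gives the heat-kernel smoothing $\|e^{s\Delta}g\|_{L^2}\lesssim s^{-1/2}\|g\|_{\dot H^{-1}}$. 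For the $L^4$ norm, the three-dimensional Sobolev embedding $\dot H^{3/4}\hookrightarrow L^4$ combined with the analogous symbol bound $\sup_\xi |\xi|^{7/2}e^{-2s|\xi|^2}\lesssim s^{-7/4}$ yields $\|e^{s\Delta}g\|_{L^4}\lesssim s^{-7/8}\|g\|_{\dot H^{-1}}$. Since $\mathbb{P}$ is a zero-order Fourier multiplier and hence bounded on $\dot H^{-1}$, substituting $g=\mathbb{P}\,Nu(\tau)$ into Duhamel produces exactly the kernels $(t-\tau)^{-1/2}$ and $(t-\tau)^{-7/8}$ required, and assembling the four pieces gives the stated inequality.

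The only delicate point is the choice of exponent $7/8$ in the second kernel: it is forced by Sobolev scaling in three dimensions ($3/4$ derivatives to embed into $L^4$ plus the $1$ derivative needed to descend from $\dot H^{-1}$, for a total gain $7/4$ distributed as $s^{-7/8}$ on the $L^4$-valued bound used in $\|v\|_{L^4}^2$). This is precisely what keeps $(t-\tau)^{-7/8}$ locally integrable ($7/8<1$) and matches the iterable form required by Lemma \ref{Lemma 1}. The explicit factor $2$ in front of $\|u_L\|_{L^\infty}$ reflects the two symmetric cross terms $u_L\otimes v$ and $v\otimes u_L$ contributing equally; the absolute constants from the Plancherel and Sobolev steps can be absorbed into the quadratic term or overlooked as they are harmless for the subsequent fixed-point argument.
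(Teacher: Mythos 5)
Your proposal is correct and follows essentially the same route as the paper: the same decomposition $u=u_L+v$ with Duhamel, the same expansion of $Nu$ into $U_0$ plus cross and quadratic terms, the same reduction to $2\|u_L\|_{L^\infty}\|v\|_{L^2}+\|v\|_{L^4}^2$, and the same smoothing rates $(t-\tau)^{-1/2}$ and $(t-\tau)^{-7/8}$. The only (immaterial) difference is that the paper obtains these rates by bounding $\|(-\Delta)^{1/2}K(t-\tau)\|_{L^1}$ and $\|(-\Delta)^{1/2}K(t-\tau)\|_{L^{4/3}}$ and applying Young's convolution inequality, whereas you use Plancherel multiplier bounds together with the Sobolev embedding $\dot H^{3/4}(\mathbb{R}^3)\hookrightarrow L^4(\mathbb{R}^3)$; both arguments share the same looseness about absolute constants in front of the kernels.
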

\begin{proof}
	Applying the Leray projection to both sides of the equation \eqref{NS}, we obtain
	\begin{equation}\label{NS1}
		\partial_t u-\Delta u=\mathbb{P}(Nu).
	\end{equation}
	We write the solution of \eqref{NS1} as follows:
	\begin{equation*}
		u(t,x)=u_L (t,x)+v(t,x),
	\end{equation*}	
	where
	$
	v(t,x)=\int_{0}^{t} e^{(t-\tau)\Delta}  \mathbb{P}(Nu(\tau)) d\tau
	$
	and $\mathbb{P}$ is the Leray projection on the divergence-free vector fields subspace. Thus we have
	\begin{equation*}
		Nu(t,x)
		=-div \left(
		(u_L+v) \otimes
		(u_L +v) 
		\right)
		=-U_0(t,x)-
		div (u_L\otimes v)- div (
		v \otimes u_L )
		- div (v \otimes v ).
	\end{equation*}	
	Denote by $K(t,x):=\frac{1}{(4\pi t)^{\frac{3}{2}}}e^{-\frac{|x|^2}{4t}}$ the heat kernel. By applying H\"{o}lder inequality, Young's inequality  and the propertied of the Leray projection, it becomes evident that
	\begin{equation*}
		\begin{split}
		\|Nu(t)\|_{\dot{H}^{-1}(\mathbb{R}^3)}
		&\leq \|U_0(t)\|_{\dot{H}^{-1}(\mathbb{R}^3)}
		+ 2\|u_L\|_{L^\infty(\mathbb{R}^3)} 
		\left\|v\right\|_{L^2(\mathbb{R}^3)}
		+\left\|v\right\|_{L^4(\mathbb{R}^3)}^2\\
		&\leq \|U_0(t)\|_{\dot{H}^{-1}(\mathbb{R}^3)}
		+2\|u_L\|_{L^\infty(\mathbb{R}^3)} 
		\int_{0}^{t} 
		\big\|(-\Delta)^{\frac{1}{2}} K(t-\tau,x-y) \big\|_{L^1(\mathbb{R}^3)}
		\|(-\Delta)^{-\frac{1}{2}}Nu(\tau)\|_{L^2(\mathbb{R}^3)} d\tau\\
		&\quad+\big(	\int_{0}^{t} 
		\big\|(-\Delta)^{\frac{1}{2}} K(t-\tau,x-y) \big\|_{L^{\frac{4}{3}}(\mathbb{R}^3)}
		\|(-\Delta)^{-\frac{1}{2}}Nu(\tau)\|_{L^2(\mathbb{R}^3)} d\tau\big)^2\\
		&\leq \|U_0(t)\|_{\dot{H}^{-1}(\mathbb{R}^3)}
		+2\|u_L\|_{L^\infty(\mathbb{R}^3)} 
		\int_{0}^{t} \frac{1}{(t-\tau)^{\frac{1}{2}}}\|Nu(\tau)\|_{\dot{H}^{-1}(\mathbb{R}^3)} d\tau\\
		&\quad+\big(\int_{0}^{t} \frac{1}{(t-\tau)^{\frac{7}{8}}}\|Nu(\tau)\|_{\dot{H}^{-1}(\mathbb{R}^3)} d\tau\big)^2,
		\end{split}
	\end{equation*}	
	which implies that we complete the proof.	
\end{proof}	

The next proposition is the main result of this section, which is instrumental in the proof of Theorem \ref{thm 1.1}.
\begin{proposition}\label{Tu}
	There exists $\epsilon_0 >0$ such that if the initial data 
	$
	\interleave (-\Delta)^{-\frac{1}{2}} U_0(t)\interleave_{2,\infty}
	\leq \epsilon_0
	$
	is satisfied, then the following estimate holds:
	\begin{equation}\label{|||Tu|||}
		\interleave (-\Delta)^{-\frac{1}{2}}Nu(t) \interleave_{2,\infty}
		\leq 2
		\interleave (-\Delta)^{-\frac{1}{2}}U_0(t) \interleave_{2,\infty}
		^{\frac{1}{2}}.
	\end{equation}
\end{proposition}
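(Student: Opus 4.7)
The plan is to convert the nonlinear integral estimate of Proposition~\ref{proposition1} into the linear template of Lemma~\ref{Lemma 1} by means of a continuity bootstrap. Write $\epsilon:=\interleave (-\Delta)^{-1/2}U_{0}\interleave_{2,\infty}$ and
\[
A(t):=\min\{1,t\}^{1/8}\,\|Nu(t)\|_{\dot H^{-1}(\mathbb R^{3})},
\]
so that the conclusion is equivalent to $\sup_{t\ge 0}A(t)\le 2\epsilon^{1/2}$. The starting point is to multiply the inequality of Proposition~\ref{proposition1} by $\min\{1,t\}^{1/8}$, substitute $\|Nu(\tau)\|_{\dot H^{-1}}=\min\{1,\tau\}^{-1/8}A(\tau)$ inside each integrand, and evaluate the resulting Beta-type weight integrals; these reduce to universal constants (up to the $t^{1/8}$ factor that Lemma~\ref{Lemma 1} already absorbs). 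The linear convolution then matches the $C_{1}$-term of Lemma~\ref{Lemma 1} with $C_{1}=2M$.

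The next move is to linearize the squared convolution. Define $T^{\dagger}:=\sup\{T\ge 0:\sup_{[0,T]}A\le 2\epsilon^{1/2}\}$, positive by continuity of $A$, and argue by contradiction that $T^{\dagger}$ is finite. Under the bootstrap hypothesis one has
\[
\Bigl(\int_{0}^{t}(t-\tau)^{-7/8}\|Nu(\tau)\|_{\dot H^{-1}}\,d\tau\Bigr)^{2}\le 2C\epsilon^{1/2}\int_{0}^{t}(t-\tau)^{-7/8}\|Nu(\tau)\|_{\dot H^{-1}}\,d\tau,
\]
where $C$ is the Beta integral coming from step one. This puts the squared term exactly into the $C_{2}$-form of Lemma~\ref{Lemma 1} with $C_{2}=2C\epsilon^{1/2}$. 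Applying Lemma~\ref{Lemma 1} yields a bound of the shape $\sup_{[0,T^{\dagger}]}A\le \epsilon\exp(\kappa(M)T^{\dagger})$, and by choosing $\epsilon_{0}$ small enough (in terms of $M$ and the ambient $T^{*}$), I get $\epsilon^{1/2}\exp(\kappa(M)T^{\dagger})<2$, strictly improving the bootstrap threshold and contradicting the maximality of $T^{\dagger}$.

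The most delicate part I anticipate is bookkeeping of the weight $\min\{1,\tau\}^{-1/8}$ when $t\ge 1$, since the $t\ge 1$ statement of Lemma~\ref{Lemma 1} no longer carries a $\tau^{-1/8}$ weight on its integrands. The fix is to split each inner integral at $\tau=1$: the $\tau\le 1$ portion yields only a bounded perturbation (controllable by $(t-1)^{-7/8}$ or $(t-1)^{-1/2}$ for $t$ away from $1$, and absorbable into the universal constants), while the $\tau\ge 1$ portion matches Lemma~\ref{Lemma 1} verbatim. A secondary concern is that $\kappa(M)$ is large, so the smallness of $\epsilon_{0}$ must be quantified against both $M$ and the length of the time horizon in order to close the bootstrap.
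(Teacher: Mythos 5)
Your proposal follows essentially the same route as the paper: both start from Proposition~\ref{proposition1}, linearize the squared convolution via the bootstrap hypothesis $\sup A\le 2\epsilon^{1/2}$ so that it becomes a $C_2$-term with $C_2\sim\epsilon^{1/2}$, and then close by Lemma~\ref{Lemma 1} with $\epsilon_0$ chosen small in terms of $M$ and $T^*$. If anything, your explicit continuity argument for $T^{\dagger}$ and your splitting of the inner integral at $\tau=1$ in the regime $t\ge 1$ are more careful than the paper's two-case treatment, which glosses over the $\tau<1$ contribution there.
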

\begin{proof} 
	We employ a bootstrap argument to establish this proposition. Initially, we denote 
	\begin{equation*}
	\interleave (-\Delta)^{-\frac{1}{2}}U_0(t,x) \interleave_{2,\infty}
	^{\frac{1}{2}}=\epsilon\leq\epsilon_0.
	\end{equation*}
	
	We first consider the behavior for small times: $0\leq t\leq 1$.
	We assume that
	$	
	\underset{0\leq t\leq1}{\sup} t^{\frac{1}{8}}\|Nu(t)\|_{\dot{H}^{-1}(\mathbb{R}^3)}
	\leq 2 \epsilon^{\frac{1}{2}}.
	$
	Based on Proposition \ref{proposition1}, we conclude that
	\begin{equation*}
		\begin{split}
		t^{\frac{1}{8}}\|Nu(t)\|_{\dot{H}^{-1}(\mathbb{R}^3)}
		&\leq \epsilon
		+2t^{\frac{1}{8}}\|u_L\|_{L^\infty(\mathbb{R}^3)} \int_{0}^{t} \frac{1}{(t-\tau)^{\frac{1}{2}}\tau^{\frac{1}{8}}} \tau^{\frac{1}{8}}\|Nu(\tau)\|_{\dot{H}^{-1}(\mathbb{R}^3)} d\tau\\
		&\quad+t^{\frac{1}{8}}\big(\int_{0}^{t} \frac{1}{(t-\tau)^{\frac{7}{8}}\tau^{\frac{1}{8}}} \tau^{\frac{1}{8}}\|Nu(\tau)\|_{\dot{H}^{-1}(\mathbb{R}^3)} d\tau\big)
		\big(\int_{0}^{t} \frac{1}{(t-\tau)^{\frac{7}{8}}\tau^{\frac{1}{8}}} 2\epsilon^{\frac{1}{2}}d\tau\big)\\
		&\leq \epsilon
		+2t^{\frac{1}{8}}\|u_L\|_{L^\infty(\mathbb{R}^3)} \int_{0}^{t} \frac{1}{(t-\tau)^{\frac{1}{2}}\tau^{\frac{1}{8}}} \tau^{\frac{1}{8}}\|Nu(\tau)\|_{\dot{H}^{-1}(\mathbb{R}^3)} d\tau\\
		&\quad
		+20\epsilon^{\frac{1}{2}}
		t^{\frac{1}{8}}\big(\int_{0}^{t} \frac{1}{(t-\tau)^{\frac{7}{8}}\tau^{\frac{1}{8}}} \tau^{\frac{1}{8}}\|Nu(\tau)\|_{\dot{H}^{-1}(\mathbb{R}^3)} d\tau\big),
		\end{split}
	\end{equation*}	
	which, together with Lemma \ref{Lemma 1} and the choice of
	$a_0=\epsilon, C_1=2\|u_L\|_{L^\infty(\mathbb{R}^3)}=:M_0, C_2=20\epsilon^{\frac{1}{2}} $,
	implies that 
	\begin{equation*}
		\underset{0\leq t\leq 1}{\sup}	t^{\frac{1}{8}}\|Nu(t)\|_{\dot{H}^{-1}(\mathbb{R}^3)}
		\leq \epsilon e^{20^8(M_0+10)^8(20\epsilon^{\frac{1}{2}}+10)^8} 
		\leq \epsilon^{\frac{1}{2}},
	\end{equation*}
	where we set $\epsilon_0=\exp(-2\cdot26^8 20^8 T^* (M_0+10)^8)$.
	
	We now consider the behavior for large times: $t \geq 1$.
	We assume that
	$	
	\underset{1\leq t\leq T^*}{\sup}
	\|Nu(t)\|_{\dot{H}^{-1}(\mathbb{R}^3)}
	\leq 2 \epsilon^{\frac{1}{2}}.
	$
	Applying Proposition \ref{proposition1} once more, we deduce that
	\begin{equation*}
		\begin{split}
		\|Nu(t)\|_{\dot{H}^{-1}(\mathbb{R}^3)}
		&\leq \epsilon
		+2\|u_L\|_{L^\infty(\mathbb{R}^3)}
		\int_{0}^{t} \frac{1}{(t-\tau)^{\frac{1}{2}}}\|Nu(\tau)\|_{\dot{H}^{-1}(\mathbb{R}^3)} d\tau\\
		&\quad+\big(\int_{0}^{t} \frac{1}{(t-\tau)^{\frac{7}{8}}}\|Nu(\tau)\|_{\dot{H}^{-1}(\mathbb{R}^3)} d\tau\big)
		\big(\int_{0}^{t} \frac{1}{(t-\tau)^{\frac{7}{8}}}2\epsilon^{\frac{1}{2}} d\tau\big)\\
		&\leq \epsilon
		+2\|u_L\|_{L^\infty(\mathbb{R}^3)}
		\int_{0}^{t} \frac{1}{(t-\tau)^{\frac{1}{2}}}\|Nu(\tau)\|_{\dot{H}^{-1}(\mathbb{R}^3)} d\tau\\
		&\quad+16t^{\frac{1}{8}} \epsilon^\frac{1}{2}
		\int_{0}^{t} \frac{1}{(t-\tau)^{\frac{7}{8}}}\|Nu(\tau)\|_{\dot{H}^{-1}(\mathbb{R}^3)} d\tau.
		\end{split}
	\end{equation*}	
	Similarly, by applying Lemma \ref{Lemma 1} and choosing
	$a_0=\epsilon, C_1=2\|u_L\|_{L^\infty(\mathbb{R}^3)}=:M_0, C_2=16\epsilon^{\frac{1}{2}} $, we can arrive at
	\begin{equation*}
		\underset{1\leq t\leq T^*}{\sup} \|Nu(t)\|_{\dot{H}^{-1}(\mathbb{R}^3)}
		\leq \epsilon e^{20^8T^* (M_0+10)^8(16\epsilon^{\frac{1}{2}}+10)^8} 
		\leq \epsilon^{\frac{1}{2}}.
	\end{equation*}
	Similarly, we also set $\epsilon_0=\exp(-2\cdot26^8 20^8 T^* (M_0+10)^8)$.\\
	These considerations lead to the result stated in \eqref{|||Tu|||}. Consequently, the proof is completed.
\end{proof}

Now, we can present the following Proposition.
\begin{proposition}\label{prop 3}
	It holds that
	\begin{equation*}
		\begin{split}
			\|v(T)\|_{\dot{H}^1(\mathbb{R}^3)}^2
			+\int_{0}^{T} \|v(t)\|_{\dot{H}^2(\mathbb{R}^3)}^2 dt
			&\leq
			\exp\big(C \|u_0\|_{\dot{B}^0_{3,2}(\mathbb{R}^3)}^2 \big)
			\|\nabla v\|_{L^3([0,T],L_x^3(\mathbb{R}^3))}^3\\
			&\quad+4
			\exp\big(C \|u_0\|_{\dot{B}^0_{3,2}(\mathbb{R}^3)}^2 \big)
			\|U_0\|_{L^2([0,T],L_x^2(\mathbb{R}^3))}^2.
		\end{split}
	\end{equation*}
\end{proposition}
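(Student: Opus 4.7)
Since $u_L = e^{t\Delta}u_0$ solves the heat equation with initial data $u_0$, the perturbation $v := u - u_L$ is divergence-free, vanishes at $t=0$, and satisfies
\begin{equation*}
\partial_t v - \Delta v + \nabla p = -U_0 - \mathrm{div}(u_L\otimes v) - \mathrm{div}(v\otimes u_L) - \mathrm{div}(v\otimes v).
\end{equation*}
I would take the $L^2$ inner product of this equation with $-\Delta v$. The pressure term drops because $v$ is divergence-free, yielding the $\dot H^1$-energy identity
\begin{equation*}
\tfrac{1}{2}\tfrac{d}{dt}\|\nabla v\|_{L^2}^2 + \|\Delta v\|_{L^2}^2 = \int U_0\cdot\Delta v\,dx + \int\bigl(u_L\cdot\nabla v + v\cdot\nabla u_L + v\cdot\nabla v\bigr)\cdot\Delta v\,dx.
\end{equation*}

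The four terms on the right I would handle separately. Cauchy--Schwarz and Young give the bound $\tfrac{1}{8}\|\Delta v\|_{L^2}^2 + 2\|U_0\|_{L^2}^2$ for the $U_0$ term. For $\int v\cdot\nabla v\cdot\Delta v$, integration by parts together with $\mathrm{div}\,v=0$ produces $-\int\partial_k v_i\,\partial_i v_j\,\partial_k v_j$, which is controlled by $\|\nabla v\|_{L^3}^3$ and feeds directly into the $\|\nabla v\|_{L^3_{t,x}}^3$ term of the claim. For $\int v\cdot\nabla u_L\cdot\Delta v$ I would apply H\"older with exponents $(6,3,2)$, the Sobolev embedding $\|v\|_{L^6}\lesssim\|\nabla v\|_{L^2}$, and Young to get a bound of the form $\tfrac{1}{8}\|\Delta v\|_{L^2}^2 + C\|\nabla u_L\|_{L^3}^2\|\nabla v\|_{L^2}^2$. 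Finally, $\int u_L\cdot\nabla v\cdot\Delta v$ I would first recast as $-\int\partial_k u_L^i\,\partial_i v_j\,\partial_k v_j$ via $\mathrm{div}\,u_L=0$, bound by $\|\nabla u_L\|_{L^3}\|\nabla v\|_{L^3}^2$, and then invoke the Gagliardo--Nirenberg interpolation $\|\nabla v\|_{L^3}^2\lesssim\|\nabla v\|_{L^2}\|\Delta v\|_{L^2}$ plus Young to again land on $\tfrac{1}{8}\|\Delta v\|_{L^2}^2 + C\|\nabla u_L\|_{L^3}^2\|\nabla v\|_{L^2}^2$.

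Absorbing the three $\tfrac{1}{8}\|\Delta v\|_{L^2}^2$ contributions into the dissipation on the left and integrating on $[0,T]$, I arrive at
\begin{equation*}
\|\nabla v(T)\|_{L^2}^2 + \int_0^T\|v\|_{\dot H^2}^2\,dt \;\le\; 4\|U_0\|_{L^2_{t,x}}^2 + C\|\nabla v\|_{L^3_{t,x}}^3 + C\int_0^T\|\nabla u_L(t)\|_{L^3}^2\,\|\nabla v(t)\|_{L^2}^2\,dt,
\end{equation*}
to which Gronwall's lemma can be applied, producing the multiplicative factor $\exp\bigl(C\int_0^T\|\nabla u_L\|_{L^3}^2\,dt\bigr)$. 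The proof is then concluded by the heat-semigroup characterization of homogeneous Besov spaces, $\|\nabla u_L\|_{L^2_t L^3_x}^2\simeq\|u_0\|_{\dot B^0_{3,2}}^2$, which converts the exponent into $C\|u_0\|_{\dot B^0_{3,2}}^2$ and matches the stated estimate.

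The main technical obstacle I expect is the sharp allocation of the two $u_L$-type terms: one must balance the H\"older triples and the Gagliardo--Nirenberg interpolation so that both contributions are measured by the same scale-invariant quantity $\|\nabla u_L\|_{L^2_t L^3_x}$, while leaving enough of the $\|\Delta v\|_{L^2}^2$ dissipation on the left. A looser pairing (say involving $\|u_L\|_{L^\infty}$ or $\|\nabla u_L\|_{L^2}$) would either break scale invariance or fail to integrate in time, precluding the clean Gronwall closure that produces $\exp(C\|u_0\|_{\dot B^0_{3,2}}^2)$.
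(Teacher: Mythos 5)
Your proposal is correct and follows essentially the same route as the paper: the $\dot H^1$ energy estimate tested against $-\Delta v$, the same term-by-term H\"older/Sobolev/Gagliardo--Nirenberg bounds reducing both $u_L$-terms to $C\|\nabla u_L\|_{L^3}^2\|\nabla v\|_{L^2}^2$, a Gronwall/integrating-factor step, and the heat-flow characterization $\int_0^\infty\|\nabla u_L\|_{L^3}^2\,dt\lesssim\|u_0\|_{\dot B^0_{3,2}}^2$. The only (immaterial) difference is that you apply Young's inequality to the $U_0$ term at the differential level, whereas the paper carries $\int U_0\cdot\Delta v\,dx$ through the time integration and absorbs it afterwards with exponentially weighted coefficients.
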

\begin{proof}
	The equation \eqref{NS} can be reformulated as
	\begin{equation}\label{NS2}
		\partial_t v -\Delta v +\nabla p=-v\cdot \nabla v -v \cdot \nabla u_L - u_L \cdot \nabla v -U_0.
	\end{equation}
	Multiply \eqref{NS2} by $-\Delta v$   and integrate over $dx$. Applying integration by parts yields
	\begin{equation*}
		\partial_t \int_{\mathbb{R}^3} |\nabla v|^2 dx
		+2\int_{\mathbb{R}^3} |\Delta v|^2 dx
		\leq \int_{\mathbb{R}^3} |\nabla v|^3 dx
		+2\int_{\mathbb{R}^3} |v| |\nabla u_L| |\Delta v| dx
		+\int_{\mathbb{R}^3} |\nabla u_L| |\nabla v|^2 dx
		+2\int_{\mathbb{R}^3} |U_0 \Delta v| dx.
	\end{equation*}
	Utilizing H\"{o}lder inequality and Young's inequality, we derive
	\begin{equation}\label{1}
		\begin{split}
			\int_{\mathbb{R}^3} |v| |\nabla u_L| |\Delta v| dx
			&\leq \frac{1}{8} \int_{\mathbb{R}^3} |\Delta v|^2 dx
			+ 2 \int_{\mathbb{R}^3} |v|^2 |\nabla u_L|^2 dx\\
			&\leq \frac{1}{8} \int_{\mathbb{R}^3} |\Delta v|^2 dx
			+2 \|v\|_{L^6(\mathbb{R}^3)} ^2
			\|\nabla u_L\|_{L^3(\mathbb{R}^3)}^2\\
			&\leq \frac{1}{8} \int_{\mathbb{R}^3} |\Delta v|^2 dx
			+2C \|\nabla v\|_{L^2(\mathbb{R}^3)}^2
			\|\nabla u_L\|_{L^3(\mathbb{R}^3)}^2,
		\end{split}
	\end{equation}
	where in the last line we make use of the fact that $\|v\|_{L^6(\mathbb{R}^3)}\leq C \|v\|_{\dot{H}^1(\mathbb{R}^3)}$.\\
	Note that
	\begin{equation*}
		\|\nabla v\|_{L^3(\mathbb{R}^3)} \leq C \|\Delta v\|_{L^2(\mathbb{R}^3)}^{\frac{1}{2}} 
		\|\nabla v\|_{L^2(\mathbb{R}^3)}^{\frac{1}{2}},
	\end{equation*}
	it readily follows that
	\begin{equation}\label{2}
		\begin{split}
			\int_{\mathbb{R}^3} |\nabla u_L| |\nabla v|^2 dx
			&\leq C\|\nabla u_L\|_{L^3(\mathbb{R}^3)}
			\|\nabla v\|_{L^3(\mathbb{R}^3)}^2\\
			&\leq \frac{1}{8} \|\Delta v\|_{L^2(\mathbb{R}^3)}^2
			+2C \|\nabla v\|_{L^2(\mathbb{R}^3)}^2
			\|\nabla u_L\|_{L^3(\mathbb{R}^3)}^2.
		\end{split}
	\end{equation}
	For similarity, in what follows, we denote $\mathcal{F}(t):=2\int_{\mathbb{R}^3} |U_0 \Delta v |dx
	+ \int_{\mathbb{R}^3} |\nabla v|^3 dx$. Thus
	the estimate \eqref{1} and the fact \eqref{2}
	collectively imply that
	\begin{equation*}
		\partial_t \|v(t)\|_{\dot{H}^1(\mathbb{R}^3)}^2
		+\frac{3}{2} \|v(t)\|_{\dot{H}^2(\mathbb{R}^3)}^2
		\leq C \|v(t)\|_{\dot{H}^1(\mathbb{R}^3)}^2
		\|u_L(t)\|_{\dot{W}^{1,3}(\mathbb{R}^3)}^2
		+\mathcal{F}(t).
	\end{equation*}
	As a result, it comes out
	\begin{equation*}
		\begin{split}
			&\partial_t 
			\big[
			\exp\big(-C \int_{0}^{t} \|u_L(\tau)\|_{\dot{W}^{1,3}(\mathbb{R}^3)}^2 d\tau\big)
			\|v(t)\|_{\dot{H}^1(\mathbb{R}^3)}^2
			\big]
			+\frac{3}{2} \exp\big(-C \int_{0}^{t} \|u_L(\tau)\|_{\dot{W}^{1,3}(\mathbb{R}^3)}^2 d\tau\big)
			\|v(t)\|_{\dot{H}^2(\mathbb{R}^3)}^2\\
			&\quad\leq \exp\big(-C \int_{0}^{t} \|u_L(\tau)\|_{\dot{W}^{1,3}(\mathbb{R}^3)}^2 d\tau\big)
			\mathcal{F}(t).
		\end{split}
	\end{equation*}
	Taking integral over time leads to the following inequality:
	\begin{equation*}
		\begin{split}
			&\exp\big(-C \int_{0}^{T} \|u_L(\tau)\|_{\dot{W}^{1,3}(\mathbb{R}^3)}^2 d\tau\big)
			\|v(T)\|_{\dot{H}^1(\mathbb{R}^3)}^2
			-\|v(0)\|_{\dot{H}^1(\mathbb{R}^3)}^2\\
			&\quad+\frac{3}{2} \int_{0}^{T} \exp\big(-C \int_{0}^{t} \|u_L(\tau)\|_{\dot{W}^{1,3}(\mathbb{R}^3)}^2 d\tau\big)
			\|v(t)\|_{\dot{H}^2(\mathbb{R}^3)}^2 dt\\
			&\quad\leq \int_{0}^{T}
			\exp\big(-C \int_{0}^{t} \|u_L(\tau)\|_{\dot{W}^{1,3}(\mathbb{R}^3)}^2 d\tau\big)
			\mathcal{F}(t) dt.
		\end{split}
	\end{equation*}
	Multiply both sides of the inequality by $\exp\big(C \int_{0}^{T} \|u_L(\tau)\|_{\dot{W}^{1,3}(\mathbb{R}^3)}^2 d\tau\big)$, which produces
	\begin{equation*}
		\begin{split}
		&\|v(T)\|_{\dot{H}^1(\mathbb{R}^3)}^2
		+ \frac{3}{2} \exp\big(C \int_{0}^{T} \|u_L(\tau)\|_{\dot{W}^{1,3}(\mathbb{R}^3)}^2 d\tau\big)
		\int_{0}^{T} \exp\big(-C \int_{0}^{t} \|u_L(\tau)\|_{\dot{W}^{1,3}(\mathbb{R}^3)}^2 d\tau\big)
		\|v(t)\|_{\dot{H}^2(\mathbb{R}^3)}^2 dt\\
		&\quad\leq \exp\big(C \int_{0}^{T} \|u_L(\tau)\|_{\dot{W}^{1,3}(\mathbb{R}^3)}^2 d\tau\big) \int_{0}^{T}
		\exp\big(-C \int_{0}^{t} \|u_L(\tau)\|_{\dot{W}^{1,3}(\mathbb{R}^3)}^2 d\tau\big)
		\mathcal{F}(t) dt,
		\end{split}
	\end{equation*}
	here $\|v(0)\|_{\dot{H}^1(\mathbb{R}^3)}=0$.
	From the preceding estimates, we can infer that
	\begin{equation}\label{estimate}
		\|v(T)\|_{\dot{H}^1(\mathbb{R}^3)}^2
		+\frac{3}{2}\int_{0}^{T} \|v(t)\|_{\dot{H}^2(\mathbb{R}^3)}^2 dt
		\leq \exp\big(C \int_{0}^{T} \|u_L(\tau)\|_{\dot{W}^{1,3}(\mathbb{R}^3)}^2 d\tau
		\big)
		\int_{0}^{T} \mathcal{F}(t) dt.
	\end{equation}
	Due to
	\begin{equation*}
		\int_{0}^{T} \|u_L(t)\|_{\dot{W}^{1,3}(\mathbb{R}^3)}^2 dt
		\leq \int_{0}^{+\infty} 
		\|u_L(t)\|_{\dot{W}^{1,3}(\mathbb{R}^3)}^2 dt
		\leq\|u_0\|_{\dot{B}^0_{3,2}(\mathbb{R}^3)}^2,
	\end{equation*}
	by substituting the above estimates into \eqref{estimate}, we deduce that
	\begin{equation}\label{m}
		\begin{split}
			&\|v(T)\|_{\dot{H}^1(\mathbb{R}^3)}^2
			+\frac{3}{2}\int_{0}^{T} \|v(t)\|_{\dot{H}^2(\mathbb{R}^3)}^2 dt\\
			&\quad\leq \exp\big(C \|u_0\|_{\dot{B}^0_{3,2}(\mathbb{R}^3)}^2 \big)
			\big(\int_{0}^{T} \int_{\mathbb{R}^3} |\nabla v|^3 dx dt
			+ 2\int_{0}^{T} \|U_0\|_{L^2(\mathbb{R}^3)} \|\Delta v\|_{L^2(\mathbb{R}^3)} dt
			\big) .
		\end{split}
	\end{equation}
	Notice that 
	\begin{equation*}
		\begin{split}
			&\int_{0}^{T} \|U_0\|_{L^2(\mathbb{R}^3)} \|\Delta v\|_{L^2(\mathbb{R}^3)} dt\\
			&\quad\leq  \int_{0}^{T} (2\exp\big(C \|u_0\|_{\dot{B}^0_{3,2}(\mathbb{R}^3)}^2 \big)
			\|U_0\|_{L^2(\mathbb{R}^3)}^2
			+\frac{1}{8\exp\big(C \|u_0\|_{\dot{B}^0_{3,2}(\mathbb{R}^3)}^2\big)}\|\Delta v\|_{L^2(\mathbb{R}^3)}^2) dt,
		\end{split}
	\end{equation*}
	absorbing the second term of right hand side by \eqref{m}, we arrive at
	\begin{equation*}
		\begin{split}
			&\|v(T)\|_{\dot{H}^1(\mathbb{R}^3)}^2
			+\int_{0}^{T} \|v(t)\|_{\dot{H}^2(\mathbb{R}^3)}^2 dt\\
			&\quad\leq
			\exp\big(C \|u_0\|_{\dot{B}^0_{3,2}(\mathbb{R}^3)}^2 \big)
			\|\nabla v\|_{L^3([0,T],L_x^3(\mathbb{R}^3))}^3
			+4
			\exp\big(C \|u_0\|_{\dot{B}^0_{3,2}(\mathbb{R}^3)}^2 \big)
			\|U_0\|_{L^2([0,T],L_x^2(\mathbb{R}^3))}^2.
		\end{split}
	\end{equation*}
	Therefore, we complete the proof of Proposition \ref{prop 3}.
\end{proof}	
\section{Proof of Theorem \ref{thm 1.1}}
With the above estimates, we are now equipped to finalize the proof of Theorem \ref{thm 1.1}.
\begin{proof}[Proof of Theorem \ref{thm 1.1}]			
	Now, we divide the proof into two parts. For the interval $[T^*,+\infty)$, 
	multiply \eqref{NS} by $ u$ and integrate over $dx$ we have $L^2$ energy inequality
	\begin{equation*}\label{energy inequality}
		\underset{t}{\sup}\|u(t,\cdot)\|_{L^2(\mathbb{R}^3)}^2+2 \int_{0}^{t}\|\nabla u(s,\cdot)\|_{L^2(\mathbb{R}^3)}^2 ds\leq\|u_0\|_{L^2(\mathbb{R}^3)}^2,
	\end{equation*}
	which leads to the conclusion that for $\forall ~T_0>0$, we have
	\begin{equation*}
		\frac{1}{2} T^* \underset{[\frac{T^*}{2},T^*]}{\inf} \|u(t)\|_{\dot{H}^1(\mathbb{R}^3)}^2
		\leq \int_{\frac{T^*}{2}}^{T^*} \|u(t)\|_{\dot{H}^1(\mathbb{R}^3)}^2 dt
		\leq \|u_0\|_{L^2(\mathbb{R}^3)}.
	\end{equation*}
	It is sufficient to yield that for $T^*=\frac{4\|u_0\|_{L^2(\mathbb{R}^3)}}{\sqrt{\epsilon_0}}$, we have
	\begin{equation*}
		\underset{[\frac{T^*}{2},T^*]}{\inf} \|u(t)\|_{\dot{H}^1(\mathbb{R}^3)}^2
		\leq \frac{2\|u_0\|_{L^2(\mathbb{R}^3)}}{T^*}
		= \frac{\epsilon_0^{\frac{1}{2}}}{2}.
	\end{equation*}
	As a result, there exist $T_0^*\in [\frac{T^*}{2},T^*]$ such that 
	$
	\|u(T_0^*)\|_{\dot{H}^1(\mathbb{R}^3)}^2 \leq \epsilon_0^{\frac{1}{2}}.
	$
	Applying Young's inequality, we arrive at
	\begin{equation*}
		\|u(T^*)\|_{\dot{H}^{\frac{1}{2}}(\mathbb{R}^3)}
		\lesssim 
		\|u(T^*)\|_{\dot{H}^1(\mathbb{R}^3)}^{\frac{1}{2}}
		\|u(T^*)\|_{L^2(\mathbb{R}^3)}^{\frac{1}{2}}
		\lesssim \epsilon_0^{\frac{1}{4}}
		\|u_0\|_{L^2(\mathbb{R}^3)}^{\frac{1}{2}},
	\end{equation*}
	where in the last inequality we use the fact that 
	$\|u(T^*)\|_{L^2(\mathbb{R}^3)} \leq \|u_0\|_{L^2(\mathbb{R}^3)}$.\\
	According to reference\cite{Fujita and Kato 1964}, it is obvious that there admits a unique global-in-time solution when $t \in [T_0^*,+\infty)$. 
	
	For the interval $[0,T^*]$, $T^*=\frac{4\|u_0\|_{L^2(\mathbb{R}^3)}}{\sqrt{\epsilon_0}}$, we employ a bootstrap argument to address the problem.
	Denote 
	$\|U_0\|_{L^2([0,T^*],L_x^2(\mathbb{R}^3))}=\epsilon \leq \epsilon_0(\|u_0\|_{\dot{B}^0_{3,2}(\mathbb{R}^3)})$.\\
	Assume
	\begin{equation*}
		\underset{0<t<T^*}{\sup} \|v(t)\|_{\dot{H}^1(\mathbb{R}^3)}^2
		+\int_{0}^{T^*} \|v(t)\|_{\dot{H}^2(\mathbb{R}^3)}^2 dt
		\leq 2C \epsilon^2
		\exp\big(3C\|u_0\|_{\dot{B}^0_{3,2}(\mathbb{R}^3)}^2\big).
	\end{equation*}
	By virtue of Gagliardo-Nirenberg interpolation, it is easy to find
	\begin{equation*}
		\|\nabla v\|_{L^3(\mathbb{R}^3)} \leq C
		\|v\|_{\dot{H}^2(\mathbb{R}^3)}^{\frac{2}{3}}
		\|v\|_{\dot{H}^1(\mathbb{R}^3)}^{\frac{1}{4}}
		\|v\|_{\dot{H}^{-1}(\mathbb{R}^3)}^{\frac{1}{12}},
	\end{equation*}
	thus by leveraging H\"{o}lder inequality, we immediately have
	\begin{equation*}
		\int_{0}^{T^*}\|\nabla v\|_{L^3(\mathbb{R}^3))}^3 dt
		\leq C
		\|v\|_{L_{T^*}^\infty \dot{H}^1(\mathbb{R}^3)}^{\frac{3}{4}}
		\|v\|_{L_{T^*}^\infty \dot{H}^{-1}(\mathbb{R}^3)}^{\frac{1}{4}}
		\int_{0}^{T^*} \|v(t)\|_{\dot{H}^2(\mathbb{R}^3)}^2 dt.
	\end{equation*}
	When $0<t<1$, on account of \eqref{|||Tu|||} we have
	\begin{equation}\label{t small}
		\|v\|_{L_{T^*}^\infty \dot{H}^{-1}(\mathbb{R}^3)}^{\frac{1}{4}}
		\leq 
		\underset{0\leq T \leq 1}{\sup}\big(
		\int_{0}^{T} \frac{2\interleave (-\Delta)^{-\frac{1}{2}} U_0(t)\interleave_{2,\infty}^{\frac{1}{2}}}{t^{\frac{1}{8}}} dt
		\big)^{\frac{1}{4}}
		\leq 2^{\frac{1}{2}}
		\interleave (-\Delta)^{-\frac{1}{2}} U_0(t)\interleave_{2,\infty}^{\frac{1}{8}}
		\leq 2\epsilon_0^{\frac{1}{8}}.
	\end{equation}
	When $1\leq t \leq T^*$, we have 
	\begin{equation}\label{t large}
		\begin{split}
			\|v\|_{L_T^\infty\dot{H}^{-1}(\mathbb{R}^3)}^{\frac{1}{4}} 
			&\leq \underset{0\leq T \leq T^*}{\sup}\big(\int_{0}^{T} \|Nu\|_{\dot{H}^{-1}}dt\big)^{\frac{1}{4}}\\
			&\leq \big(\underset{[0,1]}{\sup} \int_{0}^{1} \frac{t^{\frac{1}{8}}\|Nu\|_{\dot{H}^{-1}}}{t^{\frac{1}{8}}} dt+\underset{[1,T^*]}{\sup} \int_{1}^{T} \|Nu\|_{\dot{H}^{-1}} dt\big)^{\frac{1}{4}}\\
			&\leq\big(4\epsilon_0^{\frac{1}{2}} + 2\epsilon_0 T^*\big)^{\frac{1}{4}}.
		\end{split}
	\end{equation}
	Combining \eqref{t small} and \eqref{t large}, we achieve
	\begin{equation*}
		\begin{split}
		&\exp\big(C \|u_0\|_{\dot{B}^0_{3,2}(\mathbb{R}^3)}^2 \big)
		\|\nabla v\|_{L^3([0,T],L_x^3(\mathbb{R}^3))}^3\\
		&\leq
		\exp\big(C \|u_0\|_{\dot{B}^0_{3,2}(\mathbb{R}^3)}^2 \big) C
		(2C)^{\frac{3}{8}} \epsilon^{\frac{3}{4}} \big(\exp\big(3C\|u_0\|_{\dot{B}^0_{3,2}(\mathbb{R}^3)}^2\big)\big)^{\frac{3}{8}}
		2 C \epsilon^2 \exp\big(3C\|u_0\|_{\dot{B}^0_{3,2}(\mathbb{R}^3)}^2\big)
		\big( 4\epsilon_0^{\frac{1}{2}} +2\epsilon_0 T^*\big)^{\frac{1}{4}}\\
		&\leq C \epsilon^2 \exp(3C\|u_0\|_{\dot{B}^0_{3,2}}^2(\mathbb{R}^3)).
		\end{split}
	\end{equation*}
	On account of Proposition \ref{prop 3}, since $\epsilon_0$ is arbitrarily small, there exists $\epsilon_0$ such that
	\begin{equation*}
		\underset{0<t<T^*}{\sup} \|v(t)\|_{\dot{H}^1(\mathbb{R}^3)}^2
		+\int_{0}^{T^*} \|v(t)\|_{\dot{H}^2(\mathbb{R}^3)}^2 dt
		\leq C \epsilon^2 \exp\big(3C\|u_0\|_{\dot{B}^0_{3,2}(\mathbb{R}^3)}^2\big).
	\end{equation*}
\end{proof}

\textbf{Acknowledgements:} The authors are grateful to Professor Quoc-Hung Nguyen, who introduced this project to us and patiently guided, supported, and encouraged us during this work.

\end{document}